\documentclass[11pt]{amsart}

\usepackage[T1]{fontenc}
\usepackage[latin1]{inputenc}
\usepackage[english]{babel}
\usepackage{amsmath,amssymb,amsthm} 
\usepackage[matrix,arrow]{xy}
\usepackage{enumitem}
\usepackage{mymacros}
\usepackage{hyperref}

\theoremstyle{definition}
\newtheorem{prob}[thm]{Problem}

\renewcommand{\MR}[1]{}

\title{Henkin measures for the Drury-Arveson space}

\author{Michael Hartz}
\address{Department of Mathematics, Washington University in St. Louis, One Brookings Drive,
St. Louis, MO 63130, USA}
\email{mphartz@wustl.edu}
\thanks{The author was partially supported by a Feodor Lynen Fellowship}

\keywords{Drury-Arveson space, Henkin measure, totally singular measure, totally null set}
\subjclass[2010]{Primary 46E22; Secondary 47A13}

\begin{document}

\begin{abstract}
  We exhibit Borel probability measures on the unit sphere in $\bC^d$ for $d \ge 2$
  which are Henkin for the multiplier algebra of the Drury-Arveson space,
  but not Henkin in the classical sense.
  This provides a negative answer to a conjecture of Clou\^atre and Davidson.
\end{abstract}

\maketitle

\section{Introduction}

Let $\bB_d$ denote the open unit ball in $\bC^d$ and let $A(\bB_d)$ be the ball algebra,
which is the algebra of all analytic functions on $\bB_d$ which extend to be continuous on $\ol{\bB_d}$.
A regular complex Borel measure $\mu$ on the unit sphere $\bS_d = \partial \bB_d$ is said to be \emph{Henkin}
if the functional
\begin{equation*}
  A(\bB_d) \to \bC, \quad f \mapsto \int_{\bS_d} f \, d \mu,
\end{equation*}
extends to a weak-$*$ continuous functional on $H^\infty(\bB_d)$, the algebra of all bounded analytic
functions on $\bB_d$. Equivalently, whenever $(f_n)$ is a sequence in $A(\bB_d)$ which is uniformly
bounded on $\bB_d$ and satisfies $\lim_{n \to \infty} f_n(z) = 0$ for all $z \in \bB_d$, then
\begin{equation*}
  \lim_{n \to \infty} \int_{\bS_d} f_n \, d \mu = 0.
\end{equation*}

Henkin measures play a prominent role in the description
of the dual space of $A(\bB_d)$ and of peak interpolation sets for the ball algebra,
see Chapter 9 and 10 of \cite{Rudin08} for background material.
Such measures
are completely characterized by a theorem of Henkin \cite{Henkin68} and Cole-Range \cite{CR72}. To state
the theorem, recall that a Borel probability measure
$\tau$ on $\bS_d$ is said to be a representing measure for the origin if
\begin{equation*}
  \int_{\bS_d} f \, d \tau = f(0)
\end{equation*}
for all $f \in A(\bB_d)$.

\begin{thm}[Henkin, Cole-Range]
  A regular complex Borel measure $\mu$ on $\bS_d$ is Henkin if and only if it is absolutely continuous with respect
  to some representing measure for the origin.
\end{thm}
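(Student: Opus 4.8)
The plan is to prove the two implications separately, after recording some elementary observations. I would work with the normalized rotation-invariant surface measure $\sigma$ on $\bS_d$, which is a representing measure for the origin by the mean value property. Two facts are immediate from the definition. First, every representing measure $\tau$ for the origin is Henkin: if $(f_n)$ is uniformly bounded in $A(\bB_d)$ with $f_n \to 0$ pointwise on $\bB_d$, then $\int_{\bS_d} f_n \, d\tau = f_n(0) \to 0$. Second, every measure annihilating $A(\bB_d)$ is Henkin, since it induces the zero functional. Finally, the Henkin measures form a norm-closed subspace of $M(\bS_d)$: if $\mu_k \to \mu$ in total variation with each $\mu_k$ Henkin, then $\limsup_n \big| \int_{\bS_d} f_n \, d\mu \big| \le \big( \sup_n \|f_n\|_\infty \big) \|\mu - \mu_k\|$ for every $k$, which forces the left-hand side to vanish.

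For the ``if'' direction, suppose $\mu \ll \tau$ with $\tau$ a representing measure for the origin, and write $d\mu = g \, d\tau$ with $g \in L^1(\tau)$. When $g$ lies in the $L^1(\tau)$-closure of $A(\bB_d)$, I would pick $h_k \in A(\bB_d)$ with $h_k \to g$ in $L^1(\tau)$ and combine the identity $\int_{\bS_d} f_n h_k \, d\tau = f_n(0) h_k(0) \to 0$, valid because $f_n h_k \in A(\bB_d)$ and $\tau$ represents the origin, with the uniform bound on $(f_n)$ to conclude $\int_{\bS_d} f_n g \, d\tau \to 0$. The remaining, general case is exactly the statement that the Henkin measures are closed under absolute continuity, that is, form a band; granting this together with the Henkin-ness of $\tau$, every $\mu \ll \tau$ is Henkin.

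For the ``only if'' direction, which is the heart of the matter, I would take $\mu$ Henkin and decompose it relative to the band $\mathcal{B}$ generated by the representing measures of the origin as $\mu = \mu_a + \mu_s$, with $\mu_a \in \mathcal{B}$ and $\mu_s$ totally singular, i.e. singular to every representing measure. Since the representing measures form a convex set closed under countable convex combinations, $\mathcal{B}$ consists precisely of the measures dominated by a single representing measure; thus $\mu_a \ll \tau$ for some representing $\tau$ and is Henkin by the first direction, whence $\mu_s = \mu - \mu_a$ is Henkin as well. The theorem then reduces to showing that a nonzero totally singular measure cannot be Henkin, and this is where I expect the main obstacle to lie. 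The band structure invoked above and the vanishing of the totally singular part together amount to the abstract F. and M. Riesz theorem of Glicksberg--K\"onig--Seever for the uniform algebra $A(\bB_d)|_{\bS_d}$ with evaluation at the origin; its sharpest point is to show that a nonzero totally singular $\mu_s$ violates the Henkin condition, i.e. to manufacture a uniformly bounded sequence in $A(\bB_d)$ tending to $0$ pointwise on $\bB_d$ along which $\int_{\bS_d} f_n \, d\mu_s$ stays bounded away from $0$. This analytic construction, carried out on totally null sets, is the core of Henkin's original argument, recast by Cole and Range in the language of bands.
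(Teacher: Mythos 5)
This theorem is not proved in the paper at all: it is quoted as classical background, attributed to Henkin and Cole--Range, with Rudin's book cited for the details. So there is no in-paper argument to compare against, and your proposal has to stand on its own as a reconstruction of the classical proof. Its architecture is the standard (and correct) one: the ``if'' direction via the Henkin-ness of representing measures together with closure properties of the class of Henkin measures, and the ``only if'' direction via the Glicksberg--K\"onig--Seever decomposition $\mu = \mu_a + \mu_s$ relative to the band $\mathcal{B}$ generated by the representing measures of the origin, plus the identification of $\mathcal{B}$ with the measures dominated by a single representing measure (which correctly uses closure of the representing measures under countable convex combinations).

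The problem is that both load-bearing steps of this architecture are left as black boxes, so what you have is a road map rather than a proof. In the ``if'' direction, the warm-up case where the Radon--Nikodym derivative $g$ lies in the $L^1(\tau)$-closure of $A(\bB_d)$ covers almost nothing in general, and you reduce the rest to the assertion that the Henkin measures are closed under absolute continuity (form a band). That assertion is itself a substantial theorem --- it is Henkin's theorem in Rudin's Chapter 9, whose proof goes through Valski\u{\i}'s decomposition $\mu = h\sigma + \nu$ with $h \in L^1(\sigma)$ and $\nu$ annihilating $A(\bB_d)$, followed by a genuine $L^1$-approximation argument --- and none of that machinery appears in your sketch. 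In the ``only if'' direction you explicitly defer the statement that a nonzero totally singular measure cannot be Henkin; as you yourself note, this is the analytic heart of the whole theorem (the construction, from total singularity, of a bounded sequence in $A(\bB_d)$ tending to $0$ on $\bB_d$ but to $1$ almost everywhere with respect to $|\mu_s|$), and without it the hard implication is simply not established. Nothing you wrote is false, but the two ingredients you ``grant'' are precisely where all the work of Henkin and of Cole--Range lies.
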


If $d=1$, then the only representing measure for the origin is the normalized
Lebesgue measure on the unit circle, hence the Henkin measures on the unit circle are precisely those
measures which are absolutely continuous with respect to Lebesgue measure.

In addition to their importance in complex analysis, Henkin measures also play a role
in multivariable operator theory \cite{Eschmeier97}.
However, it has become clear over the years that for the purposes of multivariable operator theory,
the ``correct'' generalization of $H^\infty$, the algebra of bounded analytic functions on the unit disc,
to higher dimensions
is not $H^\infty(\bB_d)$, but the multiplier algebra of the Drury-Arveson space $H^2_d$.
This is the reproducing kernel Hilbert space on $\bB_d$ with reproducing kernel
\begin{equation*}
  K(z,w) = \frac{1}{1- \langle z,w \rangle}.
\end{equation*}
A theorem of Drury \cite{Drury78} shows that $H^2_d$ hosts a version of von Neumann's inequality
for commuting \emph{row contractions}, that is, tuples $T= (T_1,\ldots,T_d)$
of commuting operators on a Hilbert space $\cH$ such that
the row operator $[T_1,\ldots,T_d]: \cH^d \to \cH$ is a contraction.
The corresponding dilation theorem is due to M\"uller-Vasilescu \cite{MV93}
and Arveson \cite{Arveson98}.
The Drury-Arveson space is also known as symmetric Fock space \cite{Arveson98,DP98},
it plays a distinguished role in the theory
of Nevanlinna-Pick spaces \cite{AM00,AM00a} and is an object of interest in harmonic analysis \cite{CSW11,ARS08}.
An overview of the various features of this space can be found in \cite{Shalit13}.

In \cite{CD16}, Clou\^atre and Davidson generalize much of the classical theory of Henkin measures
to the Drury-Arveson space. Let $\cM_d$ denote the multiplier algebra of $H^2_d$ and let $\cA_d$ be
the norm closure of the polynomials in $\cM_d$.
In particular, functions in $\cA_d$ belong to $A(\bB_d)$.
Clou\^atre and Davidson define
a regular Borel measure $\mu$ on $\bS_d$ to be \emph{$\cA_d$-Henkin} if
the associated integration functional
\begin{equation*}
  \cA_d \to \bC, \quad f \mapsto \int_{\bS_d} f \, d \mu
\end{equation*}
extends to a weak-$*$ continuous functional on $\cM_d$ (see Subsection \ref{ss:DA} for the definition
of weak-$*$ topology). Equivalently,
whenever $(f_n)$ is a sequence in $\cA_d$ such that $||f_n||_{\cM_d} \le 1$ for all $n \in \bN$
and $\lim_{n \to \infty} f_n(z) = 0$ for all $z \in \bB_d$, then
\begin{equation*}
  \int_{\bS_d} f_n(z) \, d \mu = 0,
\end{equation*}
see \cite[Theorem 3.3]{CD16}. This notion, along with the complementary notion of $\cA_d$-totally
singular measures, is crucial in the study of the dual space of $\cA_d$ and of peak interpolation sets
for $\cA_d$ in \cite{CD16}.

Compelling evidence of the importance of $\cA_d$-Henkin measures in multivariable
operator theory can be found in \cite{CD16a}, where Clou\^atre and Davidson extend
the Sz.-Nagy-Foias $H^\infty$--functional calculus
to commuting row contractions.
Recall that every contraction $T$ on a Hilbert space can be written as $T = T_{cnu} \oplus U$,
where $U$ is a unitary operator and $T_{cnu}$ is completely non-unitary (i.e. has no unitary summand).
Sz.-Nagy and Foias showed that in the separable case, $T$
admits a weak-$*$ continuous $H^\infty$-functional calculus if and only if the
spectral measure
of $U$ is absolutely continuous with respect to Lebesgue measure on the unit circle, see
\cite{SFB+10} for a classical treatment.
Clou\^atre and Davidson obtain a complete generalization of this result.
The appropriate generalization of a unitary is a \emph{spherical unitary}, which is
a tuple of commuting normal operators whose joint spectrum
is contained in the unit sphere.
Every commuting row contraction admits a decomposition $T = T_{cnu} \oplus U$, where $U$ is a spherical
unitary and $T_{cnu}$ is completely non-unitary
(i.e. has no spherical unitary summand), see \cite[Theorem 4.1]{CD16a}. The following result is
then a combination of Lemma 3.1 and Theorem 4.3 of \cite{CD16a}.

\begin{thm}[Clou\^atre-Davidson]
  Let $T$ be a commuting row contraction acting on a separable Hilbert space
  with decomposition $T = T_{cnu} \oplus U$ as above.
  Then $T$ admits a weak-$*$ continuous $\cM_d$-functional calculus if and only if the spectral measure
  of $U$ is $\cA_d$-Henkin.
\end{thm}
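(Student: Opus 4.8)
The plan is to split the equivalence into a statement about the spherical unitary summand $U$ and a structural fact about the completely non-unitary summand, the latter being where I expect the real difficulty to lie. For the forward implication, suppose $\Phi \colon \cM_d \to B(\cH)$ is a weak-$*$ continuous unital homomorphism with $\Phi(z_i) = T_i$. Since the polynomials are weak-$*$ dense in $\cM_d$ (and every $m$ is a weak-$*$ limit of a bounded net of polynomials) and each $p(T) = p(T_{cnu}) \oplus p(U)$ leaves the reducing subspace $\cH_u$ of $U$ invariant, a weak-$*$ limit argument shows that $\Phi(m)$ is block diagonal for the decomposition $\cH = \cH_{cnu} \oplus \cH_u$. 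Hence $\Phi$ compresses to a weak-$*$ continuous unital homomorphism $\Phi_U$ on $B(\cH_u)$ extending the polynomial calculus of $U$. Writing $E$ for the spectral measure of $U$, the scalar functionals $m \mapsto \langle \Phi_U(m) \xi, \eta \rangle$ are then weak-$*$ continuous on $\cM_d$ and agree on $\cA_d$ with integration against $\langle E(\cdot) \xi, \eta \rangle$; since these measures determine $E$, the spectral measure of $U$ is $\cA_d$-Henkin.

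The heart of the converse is to build a weak-$*$ continuous calculus for a spherical unitary $U$ whose spectral measure $E$ is $\cA_d$-Henkin. For $m \in \cM_d$ and $0 < r < 1$ set $m_r(z) = m(rz)$; then $m_r \in \cA_d$, the dilation map is contractive on $\cM_d$ so that $\|m_r\|_{\cM_d} \le \|m\|_{\cM_d}$, and $m_r \to m$ weak-$*$ as $r \to 1$ because $(m_r)$ is bounded and converges pointwise on $\bB_d$. As $U$ is a spherical unitary we have $\|m_r(U)\| \le \sup_{|z| = r} |m(z)| \le \|m\|_{\cM_d}$, so the net $(m_r(U))_r$ is bounded, while the $\cA_d$-Henkin hypothesis guarantees that each $\langle m_r(U) \xi, \eta \rangle = \int_{\bS_d} m_r \, d \langle E \xi, \eta \rangle$ converges as $r \to 1$. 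Boundedness then forces $m_r(U)$ to converge in the weak operator topology to an operator $\Phi_U(m)$ with $\|\Phi_U(m)\| \le \|m\|_{\cM_d}$. Linearity and weak-$*$ continuity of $\Phi_U$ are inherited from the scalar extensions, and multiplicativity follows from the corresponding identity on polynomials via a two-variable weak-$*$ density argument, using that $m \mapsto pm$ and left and right multiplication by a fixed operator are weak-$*$ continuous. This establishes the equivalence for $U$.

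It remains to assemble the full statement from the identity $T = T_{cnu} \oplus U$, for which I would use that a direct sum admits a weak-$*$ continuous $\cM_d$-calculus if and only if each summand does, the only-if part being the restriction argument above and the if part being the formation of $\Phi_{cnu} \oplus \Phi_U$. The forward implication of the theorem then follows immediately. For the converse one needs, in addition to the calculus for $U$ just constructed, the structural fact that a completely non-unitary commuting row contraction always admits a weak-$*$ continuous $\cM_d$-calculus. This last point is where I expect the main obstacle to be: it is the genuine analogue of the Sz.-Nagy--Foias theorem that a completely non-unitary contraction is absolutely continuous. I would approach it through the spherical dilation of $T_{cnu}$, decomposing the minimal dilation into a multiple of the $d$-shift $M_z$ (which carries the tautological weak-$*$ continuous calculus) and a spherical unitary, and then arguing that the absence of a spherical unitary summand in $T_{cnu}$ forces the spherical unitary part of its dilation to be $\cA_d$-Henkin; compressing the dilation's calculus to $\cH_{cnu}$ would then yield the desired weak-$*$ continuous homomorphism.
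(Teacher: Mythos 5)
First, a point of order: the paper does not prove this theorem. It is quoted as background, explicitly attributed to a combination of Lemma 3.1 and Theorem 4.3 of \cite{CD16a}, so there is no in-paper proof to compare against; your proposal has to stand on its own. Much of it does. The forward direction is sound: block-diagonality of $\Phi(m)$ follows from WOT-closedness of the block-diagonal operators together with bounded weak-$*$ approximation of $m$ by polynomials, and the compression to the spherical unitary summand exhibits each scalar measure $\langle E(\cdot)\xi,\eta\rangle$ as the restriction to $\cA_d$ of a weak-$*$ continuous functional on $\cM_d$. The construction of the calculus for a spherical unitary $U$ with $\cA_d$-Henkin spectral measure via the dilates $m_r$ is also essentially correct (the boundedness of $(m_r(U))_r$, the convergence of matrix entries from the Henkin hypothesis, and the two-variable separate weak-$*$ continuity argument for multiplicativity all go through; the weak-$*$ continuity of $\Phi_U$ from its scalar coordinates is the standard predual argument). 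This is in substance the content of Lemma 3.1 of \cite{CD16a}.

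The genuine gap is exactly where you predicted it: the assertion that a completely non-unitary commuting row contraction always admits a weak-$*$ continuous $\cM_d$-functional calculus. Your proposed route --- decompose the minimal dilation as $M_z^{(\alpha)} \oplus W$ with $W$ a spherical unitary and argue that ``the absence of a spherical unitary summand in $T_{cnu}$ forces the spherical unitary part of its dilation to be $\cA_d$-Henkin'' --- identifies the correct statement to prove, but offers no argument for it, and it is not at all formal: a priori nothing prevents $\cH_{cnu}$ from having nonzero projection onto the reducing subspace of $W$ carrying the totally singular part of its spectral measure. Closing this requires real input, namely the Lebesgue-type decomposition of measures into $\cA_d$-Henkin and $\cA_d$-totally singular parts and an F.\ and M.\ Riesz--type theorem for $\cA_d$ from \cite{CD16}, which are used in \cite{CD16a} to show that the totally singular part of $W$ is unitarily equivalent to the singular spherical unitary summand of $T$ itself, hence vanishes when $T$ is completely non-unitary. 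As written, your sketch assumes precisely the theorem it is meant to establish; everything else in the proposal is correct but is the easy part.
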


This result shows that for the theory of commuting row contractions, $\cA_d$-Henkin measures
are a more suitable generalization of absolutely continuous measures on the unit circle
than classical Henkin measures.
Thus, a characterization of $\cA_d$-Henkin measures would be desirable.

Since the unit ball of $\cA_d$ is contained in the unit ball of $A(\bB_d)$, it is trivial that
every classical Henkin measure is also $\cA_d$-Henkin.
Clou\^atre and Davidson conjectured \cite[Conjecture 5.1]{CD16} that conversely, every $\cA_d$-Henkin
measure is also a classical Henkin measure, so that these two notions agree.
If true, the classical theory would apply to $\cA_d$-Henkin measures and in particular, the
Henkin and Cole-Range theorem would provide a characterization of $\cA_d$-Henkin measures.
They also formulate a conjecture for the complementary notion of totally singular measure, which
turns out to be equivalent to their conjecture on Henkin measures \cite[Theorem 5.2]{CD16}.
Note that the conjecture is vacuously true if $d= 1$, as $\cM_1 = H^\infty$.

The purpose of this note is to provide a counterexample to the conjecture of Clou\^atre and Davidson for $d \ge 2$.
To state the main result more precisely, we require one more definition. A compact set $K \subset \bS_d$
is said to be \emph{totally null} if it is null for every representing measure of the origin.
By the Henkin and Cole-Range theorem, a totally null set cannot support a non-zero
classical Henkin measure.

\begin{thm}
  \label{thm:henkin_measure}
  Let $d \ge 2$ be an integer. There exists a Borel probability measure $\mu$ on
  $\bS_d$ which is $\cA_d$-Henkin and whose support is totally null.
\end{thm}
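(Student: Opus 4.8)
The plan is to build a single explicit probability measure and to check the two required properties. Observe first that once the support of $\mu$ is totally null, $\mu$ cannot be absolutely continuous with respect to any representing measure for the origin, so by the Henkin and Cole-Range theorem $\mu$ fails to be classically Henkin as soon as it is nonzero. Thus the whole burden is to produce a nonzero measure that is supported on a totally null set and is \emph{$\cA_d$-Henkin}. For the latter I would use the sequential form of the definition: it suffices to show that $\int_{\bS_d} f_n \, d\mu \to 0$ whenever $f_n \in \cA_d$, $\|f_n\|_{\cM_d} \le 1$, and $f_n \to 0$ pointwise on $\bB_d$.

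The natural source of totally null sets is complex-tangential geometry, so I would take $\mu$ to be arclength on the complex-tangential circle
\[
  \gamma(\theta) = \tfrac{1}{\sqrt 2}\bigl(e^{i\theta}, e^{-i\theta}, 0, \ldots, 0\bigr) \in \bS_d, \qquad \theta \in [0, 2\pi),
\]
normalized to a probability measure. A direct computation gives $\langle \gamma'(\theta), \gamma(\theta) \rangle = 0$, so $\gamma$ is complex-tangential; such smooth curves are peak-interpolation sets for $A(\bB_d)$ and hence totally null. (Concretely, $z_1 z_2 \equiv \tfrac12$ on $\gamma$, which already shows that no representing measure for the origin can live on $\gamma$.) It is worth noting why the obvious attempt to certify $\cA_d$-Henkinness fails here: if one pushes an absolutely continuous measure on the circle $\bS_1$ forward along a holomorphic inner map $\phi \colon \bB_1 \to \bB_d$, then $f \mapsto f \circ \phi$ maps the unit ball of $\cM_d$ into that of $H^\infty(\bB_1)$ and the pushforward reduces to the classical one-variable case; but that pushforward is a representing measure for the interior point $\phi(0)$, hence classically Henkin, and its support is never totally null. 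A genuinely complex-tangential support, which is never the boundary of an analytic disc, is therefore unavoidable.

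To analyze the candidate $\mu$, I would pass to the diagonal. For $f_n \in \cA_d$ set
\[
  h_n(\lambda) = \frac{1}{2\pi} \int_0^{2\pi} f_n\bigl(\lambda e^{i\phi}, \lambda e^{-i\phi}, 0, \ldots, 0\bigr) \, d\phi,
\]
which depends holomorphically on $\lambda^2$. When $2|\lambda|^2 < 1$ the integration runs over points of $\bB_d$, so $h_n$ is holomorphic on the disc $\{|\lambda| < 1/\sqrt 2\}$ with $\|h_n\|_\infty \le \|f_n\|_\infty \le \|f_n\|_{\cM_d} \le 1$, while $\int_{\bS_d} f_n \, d\mu = h_n(1/\sqrt 2)$ is exactly the boundary value at the edge of this disc. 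Since $f_n \to 0$ pointwise on $\bB_d$, dominated convergence gives $h_n \to 0$ pointwise on $\{|\lambda| < 1/\sqrt 2\}$, and the family $(h_n)$ is uniformly bounded there; thus $h_n \to 0$ uniformly on compact subsets of the open disc.

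The main obstacle is precisely to promote this interior convergence to convergence of the boundary values $h_n(1/\sqrt 2)$. Boundedness together with pointwise convergence inside the disc only yields weak-$*$ convergence of $(h_n)$ in $H^\infty$ of the disc, and evaluation at the boundary point $\lambda = 1/\sqrt 2$ is \emph{not} weak-$*$ continuous---indeed for a merely sup-norm bounded sequence such as the diagonal of $(2 z_1 z_2)^n$ the boundary values do not vanish. This is where the hypothesis $\|f_n\|_{\cM_d} \le 1$, which is strictly stronger than a bound on $\|f_n\|_\infty$, has to enter: I would use the multiplier norm to control the coefficients, equivalently the boundary regularity, of the diagonal $h_n$ near $\lambda = 1/\sqrt 2$ (for instance via the estimate $|\langle f_n, z_1^a z_2^a\rangle_{H^2_d}| \le \|z_1^a z_2^a\|$ and sharper square-function versions of it), forcing $h_n(1/\sqrt 2) \to 0$. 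Making this quantitative step work---and thereby exhibiting a genuine gap between the multiplier norm and the supremum norm at the boundary---is the crux of the argument; if the single circle $\gamma$ turns out not to carry enough room, the same scheme applies to a measure averaged over the rotated family of such complex-tangential circles, which remains totally null while smoothing the boundary behavior of the diagonal.
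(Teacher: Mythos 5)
Your reduction is sound as far as it goes: once the support is totally null, non-classical-Henkinness is automatic, and your chosen circle $\gamma$ does have totally null closure, since it lies in $r^{-1}(\{1\})$ for $r=2z_1z_2$ and $\tfrac{1+r}{2}$ peaks there. The gap is the entire second half of the argument: you never prove $\int_{\bS_d} f_n\,d\mu\to 0$, and you say so yourself (``making this quantitative step work \dots is the crux''). Worse, for the specific measure you chose this step is not merely missing; the estimates you propose demonstrably fail, and the measure is the wrong one. The functional $\varphi\mapsto\int\varphi\,d\mu$ kills every monomial except $(z_1z_2)^a$, where it takes the value $2^{-a}$, so the natural weak-$*$ continuous representative would be $\varphi\mapsto\langle M_\varphi 1,g\rangle$ with $g=\sum_a 2^{-a}\|(z_1z_2)^a\|^{-2}(z_1z_2)^a$; but Lemma \ref{lem:stirling} gives $\|g\|^2\simeq\sum_a(a+1)^{-1/2}=\infty$. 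In the language of your diagonal $h_n$: the square-function bound yields $\sum_a|\widehat{(f_n)}_{(a,a)}|^2\,4^{-a}(a+1)^{1/2}\le\|f_n\|_{H^2_d}^2\le 1$, and Cauchy--Schwarz then bounds $|h_n(1/\sqrt{2})|=\bigl|\sum_a\widehat{(f_n)}_{(a,a)}2^{-a}\bigr|$ by a constant times $\bigl(\sum_a(a+1)^{-1/2}\bigr)^{1/2}$, which diverges. This divergence is exactly the statement that the diagonal subspace of $H^2_2$ is the weighted Dirichlet space $\cD_{1/2}$, whose kernel satisfies $K(1,1)=\infty$: evaluation at the boundary point $1$ is unbounded on $\cD_{1/2}$, so a single circle carries no room at all. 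Note also that your circle involves only two coordinates, so by Lemma \ref{lem:extension} the difficulty is identical for every $d\ge 2$; increasing $d$ does not help.

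The paper's construction is essentially your parenthetical fallback, actually carried out, and the repair is not cosmetic. For $d\ge 4$ a single fiber does work, but it must be the full $(d-1)$-torus $r^{-1}(\{1\})$ with $r=d^{d/2}z_1\cdots z_d$, because then the relevant diagonal space has $\sum_n\|r^n\|^{-2}\simeq\sum_n(n+1)^{-(d-1)/2}<\infty$ and boundary evaluation is a bounded functional represented by an honest vector $g\in H^2_d$. For $d=2$ the Dirac mass at $1\in\bT$ must be replaced by a probability measure $\sigma$ of finite $1/2$-Riesz energy supported on a Lebesgue-null compact set $E\subset\bT$ (the Cantor measure on the circular middle-thirds Cantor set works); the energy condition gives $\sum_n|\widehat\sigma(n)|^2(n+1)^{-1/2}<\infty$, which is precisely the decay needed to make your Cauchy--Schwarz step converge, and the resulting measure on $\bS_2$ is supported on the union $r^{-1}(E)$ of your circles over $E$. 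So your scheme points in the right direction, but the capacitary input is not an optional smoothing device --- it is the whole content of the $d=2$ case, and without it the claim you need about $\gamma$ itself is false for the only reason your argument could have succeeded.
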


In fact, every measure which is supported on a totally null set is totally singular (i.e. it
is singular with respect to every representing measure of the origin). The measure in Theorem \ref{thm:henkin_measure} therefore also serves at the same time a counterexample to the conjecture
of Clou\^atre and Davidson on totally singular measures, even without invoking \cite[Theorem 5.2]{CD16}.

It is not hard to see that if $\mu$ is a measure on $\bS_d$ which satisfies
the conclusion of Theorem \ref{thm:henkin_measure}, then so does the trivial
extension of $\mu$ to $\bS_{d'}$ for any $d' \ge d$ (see Lemma \ref{lem:extension}),
hence it suffices to prove Theorem \ref{thm:henkin_measure} for $d = 2$.
In fact, the construction of such a measure $\mu$ is easier in the case $d=4$, so
will consider that case first.

The remainder of this note is organized as follows. In Section \ref{sec:prelim}, we recall some of
the necessary background material. Section \ref{sec:d4} contains the construction of a measure
$\mu$ which satisfies the conclusion of Theorem \ref{thm:henkin_measure} in the case $d=4$. In Section
\ref{sec:d2}, we prove Theorem \ref{thm:henkin_measure} in general.

\section{Preliminaries}
\label{sec:prelim}

\subsection{The Drury-Arveson space}
\label{ss:DA}

As mentioned in the introduction, the Drury-Arveson space $H^2_d$ is the reproducing kernel Hilbert
space on $\bB_d$ with reproducing kernel
\begin{equation*}
  K(z,w) = \frac{1}{1 - \langle z,w \rangle}.
\end{equation*}
For background material on reproducing kernel Hilbert spaces, see \cite{PR16} and \cite{AM02}.
We will require a more concrete description of $H^2_d$.
Recall that if $\alpha  = (\alpha_1,\ldots,\alpha_d)
\in \bN^d$ is a multi-index and if $z = (z_1,\ldots,z_d) \in \bC^d$, one usually writes
\begin{equation*}
  z^\alpha = z_1^{\alpha_1} \ldots z_d^{\alpha_d}, \quad \alpha! = \alpha_1! \ldots \alpha_d!, \quad
  |\alpha| = \alpha_1 + \ldots + \alpha_d.
\end{equation*}
The monomials $z^\alpha$ form an orthogonal basis of $H^2_d$, and
\begin{equation*}
  ||z^\alpha||^2_{H^2_d} = \frac{\alpha!}{|\alpha|!}
\end{equation*}
for every multi-index $\alpha$,
see \cite[Lemma 3.8]{Arveson98}.

Let $(x_n)$ and $(y_n)$ be two sequences of positive numbers. We write
$x_n \simeq  y_n$ to mean that there exist $C_1,C_2 > 0$ such that
\begin{equation*}
  C_1 y_n \le x_n \le  C_2 y_n \quad \text{ for all } n \in \bN.
\end{equation*}
The following well-known result can be deduced
from Stirling's formula, see \cite[p.19]{Arveson98}.

\begin{lem}
  \label{lem:stirling}
  Let $d \in \bN$. Then
  \begin{equation*}
    ||(z_1 z_2 \ldots  z_d)^n||_{H^2_d}^2 \simeq d^{-n d} (n+1)^{(d-1)/2}
  \end{equation*}
  for all $n \in \bN$. \qed
\end{lem}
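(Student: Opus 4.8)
The plan is to reduce the claim to a direct computation with factorials and then invoke Stirling's formula. First I would specialize the explicit norm formula $||z^\alpha||^2_{H^2_d} = \alpha!/|\alpha|!$ to the multi-index $\alpha = (n, n, \ldots, n) \in \bN^d$, for which $z^\alpha = (z_1 z_2 \cdots z_d)^n$, $\alpha! = (n!)^d$, and $|\alpha| = nd$. This immediately gives the exact identity
\begin{equation*}
  ||(z_1 z_2 \cdots z_d)^n||^2_{H^2_d} = \frac{(n!)^d}{(nd)!},
\end{equation*}
so the entire problem becomes understanding the asymptotics of this single ratio.

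Next I would substitute Stirling's formula $m! \sim \sqrt{2\pi m}\,(m/e)^m$ into both the numerator and the denominator. The exponential parts combine cleanly, since $(n/e)^{nd}/(nd/e)^{nd} = d^{-nd}$, and this produces the dominant factor. The polynomial prefactors give $(2\pi n)^{d/2}/\sqrt{2\pi n d} = C_d\, n^{(d-1)/2}$ with $C_d = (2\pi)^{(d-1)/2}/\sqrt{d}$ a positive constant depending only on $d$. Collecting these yields the asymptotic equivalence
\begin{equation*}
  \frac{(n!)^d}{(nd)!} \sim C_d\, d^{-nd}\, n^{(d-1)/2} \qquad (n \to \infty).
\end{equation*}

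Finally I would upgrade this asymptotic equivalence to the uniform two-sided bound demanded by the relation $\simeq$. Since the quotient of $||(z_1 \cdots z_d)^n||^2_{H^2_d}$ by $d^{-nd}(n+1)^{(d-1)/2}$ converges to the positive constant $C_d$ (using that $n^{(d-1)/2} \simeq (n+1)^{(d-1)/2}$ for $n \ge 1$), it is bounded away from $0$ and $\infty$ for all large $n$; the remaining finitely many values contribute only positive finite quotients, so taking minima and maxima over this finite set and combining with the tail estimate produces the required constants $C_1, C_2 > 0$. Here the shift from $n$ to $n+1$ is precisely what keeps the comparison sequence positive at $n = 0$, where both sides equal $1$. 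I do not expect a genuine obstacle in this argument; the only point requiring a little care is this passage from an asymptotic $\sim$ to the uniform $\simeq$, which is routine once one notes that $d^{-nd}(n+1)^{(d-1)/2}$ never vanishes.
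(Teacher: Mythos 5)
Your argument is correct and is exactly the route the paper intends: the paper gives no proof beyond citing Stirling's formula and \cite[p.19]{Arveson98}, and your computation of $(n!)^d/(nd)!$ via the norm formula $||z^\alpha||^2_{H^2_d} = \alpha!/|\alpha|!$ followed by Stirling is the standard derivation. The constant $C_d = (2\pi)^{(d-1)/2}/\sqrt{d}$ and the upgrade from $\sim$ to $\simeq$ are both handled correctly.
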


The multiplier algebra of $H^2_d$ is
\begin{equation*}
  \cM_d = \{ \varphi: \bB_d \to \bC : \varphi f \in H^2_d \text{ for all } f \in H^2_d \}.
\end{equation*}
Every $\varphi \in \cM_d$ gives rise to a bounded multiplication operator $M_\varphi$ on $H^2_d$, and
we set $||\varphi||_{\cM_d} = ||M_\varphi||$. Moreover,
we may identify $\cM_d$ with a unital subalgebra of $B(H^2_d)$, the algebra of bounded operators
on $H^2_d$. It is not hard to see that $\cM_d$ is WOT-closed, and hence weak-$*$ closed, inside of $B(H^2_d)$.
Thus, $\cM_d$ becomes a dual space in this way, and we endow it with the resulting weak-$*$ topology.
In particular, for every $f,g \in H^2_d$, the functional
\begin{equation*}
  \cM_d \to \bC, \quad \varphi \mapsto \langle M_\varphi f,g \rangle,
\end{equation*}
is weak-$*$ continuous. Moreover, it is well known and not hard to see that on bounded subsets of $\cM_d$,
the weak-$*$ topology coincides with the topology of pointwise convergence on $\bB_d$.

\subsection{Henkin measures and totally null sets}

Let $K \subset \bS_d$ be a compact set. A function $f \in A(\bB_d)$ is said to \emph{peak on $K$}
if $f = 1$ on $K$ and $|f(z)| < 1$ for all $z \in \ol{\bB_d} \setminus K$.
Recall that $K$ is said to be \emph{totally null} if it is null for every representing measure of the
origin. In particular, if $d=1$, then $K$ is totally null if and only if it is a Lebesgue null set.
We will make repeated use of the following characterization of totally null
sets, see \cite[Theorem 10.1.2]{Rudin08}.

\begin{thm}
  \label{thm:peak}
  A compact set $K \subset \bS_d$ is totally null if and only if there exists a function
  $f \in A(\bB_d)$ which peaks on $K$.
\end{thm}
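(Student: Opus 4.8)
The plan is to prove the two implications separately, the substance lying entirely in the direction that totally null sets must carry peak functions.

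For the easy direction, suppose $f \in A(\bB_d)$ peaks on $K$, and let $\tau$ be an arbitrary representing measure for the origin; I must show $\tau(K) = 0$. Since $|f| \le 1$ on $\ol{\bB_d}$ and $0 \in \ol{\bB_d} \setminus K$, we have $|f(0)| < 1$. The powers $f^n$ again lie in $A(\bB_d)$, so the representing property gives $\int_{\bS_d} f^n \, d\tau = f(0)^n \to 0$. On the other hand $f^n \to \mathbf{1}_K$ pointwise on $\bS_d$ (it equals $1$ on $K$ and tends to $0$ off $K$, where $|f| < 1$), while $|f^n| \le 1$, so dominated convergence yields $\int_{\bS_d} f^n \, d\tau \to \tau(K)$. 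Hence $\tau(K) = 0$ for every representing measure $\tau$, i.e. $K$ is totally null.

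For the converse I would pass to the uniform-algebra picture: by the maximum modulus principle, restriction to $\bS_d$ embeds $A(\bB_d)$ isometrically as a closed subalgebra of $C(\bS_d)$, and a function peaking on $K$ is exactly what the peak-interpolation theorem of Bishop produces for a peak-interpolation set. That theorem asserts that a compact set $K \subset \bS_d$ is a peak-interpolation set (which in particular yields a function peaking on $K$) precisely when $|\nu|(K) = 0$ for every regular Borel measure $\nu$ annihilating $A(\bB_d)$. As $K$ is compact in the metric space $\bS_d$ it is automatically a $G_\delta$, so no regularity obstruction arises. Thus the converse reduces to a single claim: if $K$ is totally null, then $|\nu|(K) = 0$ for every $\nu \in A(\bB_d)^\perp$.

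To establish this claim I would invoke the generalized F. and M. Riesz theorem of Henkin and Cole--Range (the same circle of ideas behind the Henkin--Cole--Range theorem quoted above): every measure annihilating $A(\bB_d)$ is a Henkin measure. Granting this, any $\nu \in A(\bB_d)^\perp$ is absolutely continuous with respect to some representing measure $\tau$ of the origin; since $K$ is totally null we have $\tau(K) = 0$, whence $|\nu|(K) = 0$, completing the reduction. The main obstacle is precisely this input, namely that annihilating measures are Henkin, combined with Bishop's theorem: the forward direction is a routine dominated-convergence argument, whereas the converse rests on the full strength of the Henkin--Cole--Range structure theory and abstract peak-interpolation. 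An alternative, more hands-on route for the converse would bypass Bishop's theorem and build the peak function directly via Bishop's iterative construction, but the duality argument above is cleaner and isolates exactly which deep input is required.
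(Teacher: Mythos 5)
Your argument is correct, and it is essentially the standard proof of this result: the paper does not prove the theorem but cites it as \cite[Theorem 10.1.2]{Rudin08}, whose proof is exactly your combination of the dominated convergence argument for the easy direction and, for the converse, Henkin's generalized F.\ and M.\ Riesz theorem plus Cole--Range plus Bishop's peak-interpolation theorem applied to $A(\bB_d)$ viewed inside $C(\bS_d)$. The only point worth making explicit is that Bishop's theorem on $C(\bS_d)$ gives $|f|<1$ only on $\bS_d \setminus K$, and one passes to $|f|<1$ on all of $\ol{\bB_d}\setminus K$ by the maximum modulus principle, since $f$ is non-constant ($K$ totally null forces $K \ne \bS_d$).
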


If $d' \ge d$, then we may regard $\bS_d \subset \bS_{d'}$ in an obvious way. Thus,
every regular Borel measure $\mu$ on $\bS_d$ admits a trivial extension $\widehat \mu$ to $\bS_{d'}$ defined by
\begin{equation*}
  \widehat \mu(A) = \mu(A \cap \bS_d)
\end{equation*}
for Borel sets $A \subset \bS_{d'}$.
The following easy lemma shows that it suffices to prove Theorem \ref{thm:henkin_measure}
in the case $d=2$. 

\begin{lem}
  \label{lem:extension}
  Let $\mu$ be a Borel probability measure on $\bS_d$, let $d' \ge d$ and let $\widehat \mu$
  be the trivial extension of $\mu$ to $\bS_{d'}$.
  \begin{enumerate}[label=\normalfont{(\alph*)}]
    \item If $\mu$ is $\cA_{d}$-Henkin, then $\widehat \mu$ is $\cA_{d'}$-Henkin.
    \item If the support of $\mu$ is a totally null subset of $\bS_{d}$, then
      the support of $\widehat \mu$ is a totally null subset of $\bS_{d'}$.
  \end{enumerate}
\end{lem}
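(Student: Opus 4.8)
The plan is to reduce each statement in dimension $d'$ to the corresponding statement in dimension $d$ via the coordinate embedding $\bB_d \hookrightarrow \bB_{d'}$, $z \mapsto (z,0)$, under which $\bS_d \subset \bS_{d'}$. The one nonroutine ingredient I would isolate first is that restriction to the subspace $\bC^d \times \{0\}$ is contractive on multipliers. Since the reproducing kernel $K(z,w) = 1/(1 - \langle z, w\rangle)$ of $H^2_{d'}$ restricts on $(\bC^d \times \{0\})^2$ to the reproducing kernel of $H^2_d$, the standard restriction theory for reproducing kernel Hilbert spaces (\cite{PR16,AM02}) shows that the restriction operator $R : H^2_{d'} \to H^2_d$, $f \mapsto f(\,\cdot\,, 0)$, is a coisometry, i.e.\ $R R^* = I$. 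A short computation then gives $M_{\rho(\varphi)} = R M_\varphi R^*$ for the restriction $\rho(\varphi) := \varphi(\,\cdot\,, 0)$ of any $\varphi \in \cM_{d'}$, so that $\rho : \cM_{d'} \to \cM_d$ is a contractive unital homomorphism. As $\rho$ carries polynomials to polynomials, it maps $\cA_{d'}$ into $\cA_d$.

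For part (a) I would invoke the sequential description of the Henkin property recorded in the introduction. Given a sequence $(g_n)$ in $\cA_{d'}$ with $\|g_n\|_{\cM_{d'}} \le 1$ and $g_n(w) \to 0$ for all $w \in \bB_{d'}$, put $f_n = \rho(g_n) \in \cA_d$. Contractivity of $\rho$ yields $\|f_n\|_{\cM_d} \le 1$, and $f_n(z) = g_n(z,0) \to 0$ for all $z \in \bB_d$. The $\cA_d$-Henkin hypothesis then gives $\int_{\bS_d} f_n \, d\mu \to 0$. Because $\widehat \mu$ is carried by $\bS_d$ and $g_n$ agrees with $f_n$ there, $\int_{\bS_{d'}} g_n \, d\widehat \mu = \int_{\bS_d} f_n \, d\mu \to 0$, which is exactly the $\cA_{d'}$-Henkin condition for $\widehat \mu$.

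For part (b) I would use the peak-function characterization of totally null sets (Theorem \ref{thm:peak}). First observe that the support of $\widehat \mu$ equals the support $K$ of $\mu$, viewed inside $\bS_{d'}$. Choosing $f \in A(\bB_d)$ peaking on $K$, I would define $g(z,\zeta) = f(z)$ on $\ol{\bB_{d'}}$; since $|z| \le 1$ whenever $(z,\zeta) \in \ol{\bB_{d'}}$, this function lies in $A(\bB_{d'})$ and equals $1$ on $K$. If $|g(z,\zeta)| = 1$, then $|f(z)| = 1$ forces $z \in K$, hence $|z| = 1$ and therefore $\zeta = 0$; thus $g$ peaks exactly on $K \subset \bS_{d'}$, and Theorem \ref{thm:peak} shows that $K$ is totally null in $\bS_{d'}$.

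The only real content lies in the contractivity of the restriction map on multipliers, together with its prerequisite that the restricted kernel is precisely the Drury--Arveson kernel in dimension $d$. I expect this to be the main point, but it is an immediate consequence of the general restriction principle for reproducing kernel Hilbert spaces; everything else is bookkeeping about supports and pointwise limits.
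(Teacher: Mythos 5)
Your proof is correct and follows essentially the same route as the paper: both rest on the identity $M_{\varphi(\cdot,0)} = V^* M_\varphi V$ (your $R$ is $V^*$) to see that restriction is a contractive, $\cA_{d'} \to \cA_d$ map of multiplier algebras, and both handle part (b) by transferring the peak function via $f \circ P$. The only cosmetic difference is that in (a) you verify the sequential characterization of the Henkin property, whereas the paper directly composes the weak-$*$ continuous extension $\Phi$ with the weak-$*$--weak-$*$ continuous restriction map; these are equivalent by \cite[Theorem 3.3]{CD16}.
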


\begin{proof}
  (a)
  Let $P: \bC^{d'} \to \bC^d$ denote the orthogonal projection onto the first $d$ coordinates.
  It follows from the concrete description of the Drury-Arveson space at the beginning
  of Subsection \ref{ss:DA} that
  \begin{equation*}
    V: H^2_d \to H^2_{d'}, \quad f \mapsto f \circ P,
  \end{equation*}
  is an isometry. Moreover, $V^* M_\varphi V = M_{\varphi|_{\bB_d}}$ for every
  $\varphi \in \cM_{d'}$, so that
  \begin{equation*}
    \cM_{d'} \mapsto \cM_d, \quad \varphi \mapsto \varphi \big|_{\bB_d},
  \end{equation*}
  is weak-$*$-weak-$*$ continuous and maps $\cA_{d'}$ into $\cA_d$.
  
  Suppose now that $\mu$ is $\cA_d$-Henkin. Then there exists a weak-$*$ continuous
  functional $\Phi$ on $\cM_d$ which extends the integration functional given by $\mu$,
  thus
  \begin{equation*}
    \int_{\bS_{d'}} \varphi \, d \widehat \mu = \int_{\bS_{d}} \varphi \, d \mu
    = \Phi( \varphi \big|_{\bB_d})
  \end{equation*}
  for $\varphi \in \cA_{d'}$. Since the right-hand side defines a weak-$*$ continuous
  functional on $\cM_{d'}$, we see that $\widehat \mu$ is $\cA_{d'}$-Henkin.

  (b) We have to show that if $K \subset \bS_d$ is totally null,
  then $K$ is also totally null as a subset of $\bS_{d'}$. But this is immediate from Theorem \ref{thm:peak}
  and the observation that if $f \in A(\bB_d)$ peaks on $K$, then $f \circ P \in A(\bB_{d'})$ peaks
  on $K$ as well, where $P$ denotes the orthogonal projection from (a).
\end{proof}

\section{The case \texorpdfstring{$d=4$}{d=4}}
\label{sec:d4}

The goal of this section is to prove Theorem \ref{thm:henkin_measure} in the case $d=4$
(and hence for all $d \ge 4$ by Lemma \ref{lem:extension}).
To prepare and motivate the construction of the measure $\mu$,
we begin by considering analogues of
Henkin measures for more general reproducing kernel Hilbert spaces
on the unit disc. Suppose that $\cH$ is a reproducing kernel Hilbert space on the unit disc $\bD$
with reproducing kernel of the form
\begin{equation}
  \label{eqn:circular_kernel}
  K(z,w) = \sum_{n=0}^\infty a_n (z \ol{w})^n,
\end{equation}
where $a_0 = 1$ and $a_n > 0$ for all $n \in \bN$.
If $\sum_{n=0}^\infty a_n < \infty$, then the series above
converges uniformly on $\ol{\bD} \times \ol{\bD}$, and $\cH$ becomes a reproducing kernel
Hilbert space of continuous functions on $\ol{\bD}$ in this way.
In particular, evaluation at $1$
is a continuous functional on $\cH$ and hence a weak-$*$ continuous functional on $\Mult(\cH)$.
Indeed,
\begin{equation*}
  \varphi(1) = \langle M_\varphi 1 , K(\cdot,1) \rangle_{\cH}
\end{equation*}
for $\varphi \in \Mult(\cH)$.
Therefore, the Dirac measure $\delta_{1}$ induces
a weak-$*$ continuous functional on $\Mult(\cH)$, but it is not absolutely
continuous with respect to Lebesgue measure, and hence not Henkin. (In fact, every regular Borel measure on the unit circle induces a weak-$*$ continuous functional on $\Mult(\cH)$.)

The main idea of the construction is to embed a reproducing kernel Hilbert space
as in the preceding paragraph into $H^2_4$.

To find the desired space $\cH$ on the disc, recall that
by the inequality of arithmetic and geometric means,
\begin{equation*}
  \sup \{ |z_1 z_2 \ldots z_d|: z \in \ol{\bB_d} \} = d^{-d/2},
\end{equation*}
and the supremum is attained if and only if $|z_1| = \ldots = |z_d| = d^{-1/2}$.
Hence,
\begin{equation*}
  r: \ol{\bB_4} \to \ol{\bD}, \quad z \mapsto 16 z_1 z_2 z_3 z_4,
\end{equation*}
indeed takes values in $\ol{\bD}$, and it maps $\bB_4$ onto $\bD$.
For $n \in \bN$, let
\begin{equation*}
  a_n = ||r(z)^n||^{-2}_{H^2_4},
\end{equation*}
and let $\cH$ be the reproducing kernel Hilbert space on $\bD$
with reproducing kernel
\begin{equation*}
  K(z,w) = \sum_{n=0}^\infty a_n (z \ol{w})^n.
\end{equation*}

\begin{lem}
  \label{lem:4_isom}
  The map
\begin{equation*}
  \cH \to H^2_4, \quad f \mapsto f \circ r,
\end{equation*}
is an isometry, and
$\sum_{n=0}^\infty a_n < \infty$.
\end{lem}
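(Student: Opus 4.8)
The plan is to use the fact that the kernel of $\cH$ was engineered precisely so that $f \mapsto f \circ r$ carries each monomial $z^n$ to a scalar multiple of a single $H^2_4$-monomial. First I would record the norms of monomials in $\cH$. Since the kernel of $\cH$ is $\sum_{n=0}^\infty a_n (z \ol{w})^n$ with all $a_n > 0$, the monomials $\{z^n\}_{n \ge 0}$ form an orthogonal basis of $\cH$ with $||z^n||^2_{\cH} = a_n^{-1}$; this is the standard description of a reproducing kernel Hilbert space with a diagonal kernel, obtained by checking that $\{\sqrt{a_n}\, z^n\}$ is an orthonormal basis reproducing the kernel.

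Next I would verify the isometry property on this basis. Under $f \mapsto f \circ r$, the monomial $z^n$ maps to $r(z)^n = 16^n (z_1 z_2 z_3 z_4)^n = 16^n z^{(n,n,n,n)}$, a scalar multiple of a single monomial in $H^2_4$. Because the monomials $z^\alpha$ are orthogonal in $H^2_4$ and the assignment $n \mapsto (n,n,n,n)$ sends distinct indices to distinct multi-indices, the images $\{r(z)^n\}_{n \ge 0}$ are pairwise orthogonal in $H^2_4$. By the very definition $a_n = ||r(z)^n||^{-2}_{H^2_4}$, we have $||r(z)^n||^2_{H^2_4} = a_n^{-1} = ||z^n||^2_{\cH}$. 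Thus the map sends the orthogonal basis of $\cH$ to an orthogonal system in $H^2_4$ with identical norms, so it extends from polynomials by density to a well-defined linear isometry on all of $\cH$.

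For summability I would compute $a_n$ explicitly using Lemma \ref{lem:stirling}. Since $a_n = 16^{-2n} ||(z_1 z_2 z_3 z_4)^n||^{-2}_{H^2_4}$, and Lemma \ref{lem:stirling} with $d = 4$ gives $||(z_1 z_2 z_3 z_4)^n||^2_{H^2_4} \simeq 4^{-4n} (n+1)^{3/2}$, the exponential factors cancel exactly: writing $16^{-2n} = 256^{-n}$ and $4^{4n} = 256^{n}$, we obtain $a_n \simeq (n+1)^{-3/2}$. Since $3/2 > 1$, the $p$-series $\sum_{n=0}^\infty (n+1)^{-3/2}$ converges, and therefore $\sum_{n=0}^\infty a_n < \infty$.

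As a plan there is little genuine difficulty; the only point requiring care is confirming that $f \mapsto f \circ r$ is well defined and linear on all of $\cH$ rather than merely on polynomials, which follows from the extension-by-density argument once the orthogonality and norm-matching on the monomial basis are in hand. The conceptually essential point, and the reason $d = 4$ is the convenient case, is the \emph{exact} cancellation of the exponential terms $16^{-2n}$ and $4^{4n}$, which leaves behind the surviving polynomial factor with the summable exponent $3/2 > 1$.
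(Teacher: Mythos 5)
Your proposal is correct and follows essentially the same route as the paper: identify the monomials as an orthogonal basis of $\cH$ with $\|z^n\|^2 = 1/a_n$, match norms with the orthogonal sequence $r(z)^n$ in $H^2_4$ to get the isometry, and use Lemma \ref{lem:stirling} to see that the exponential factors cancel and $a_n \simeq (n+1)^{-3/2}$ is summable. The only difference is that you spell out the density/extension step and the exact cancellation more explicitly than the paper does.
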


\begin{proof}
  It is well known that for any space on $\bD$ with kernel as in Equation
  \eqref{eqn:fourier_decay}, the monomials $z^n$ form an orthogonal basis
  and $||z^n||^2 = 1/a_n$ for $n \in \bN$. Thus, with our choice of $(a_n)$ above,
  we have
  \begin{equation*}
    ||z^n||^2 = \frac{1}{a_n} = ||r(z)^n||^2_{H^2_4}.
  \end{equation*}
  Since the sequence $r(z)^n$ is an orthogonal sequence in $H^2_4$,
  it follows that $V$ is an isometry.
  Moreover, an application of Lemma \ref{lem:stirling} shows that
  \begin{equation*}
    ||r(z)^n||^2 = 4^{4 n} ||(z_1,\ldots,z_4)^n||^2
    \simeq (n+1)^{3/2},
  \end{equation*}
  so that $a_n \simeq (n+1)^{-3/2}$, and hence $\sum_{n=0}^\infty a_n < \infty$.
\end{proof}

Let
\begin{equation*}
  h: \bT^3 \to \bS_4, \quad
  (\zeta_1,\zeta_2,\zeta_3) \mapsto 
  1/2 (\zeta_1, \zeta_2, \zeta_3, \ol{\zeta_1 \zeta_2 \zeta_3})
\end{equation*}
and observe that the range of $h$ is contained in $r^{-1}(\{1\})$.
Let $\mu$ be the pushforward of the normalized Lebesgue measure $m$ on $\bT^3$ by $h$,
that is,
\begin{equation*}
  \mu(A) = m ( h^{-1} (A))
\end{equation*}
for a Borel subset $A$ of $\bS_4$. We will show that
$\mu$ satisfies
the conclusion of Theorem \ref{thm:henkin_measure}.

\begin{lem}
  The support of $\mu$ is totally null.
\end{lem}

\begin{proof}
  Let $X = r^{-1}(\{1\})$, which is compact,
  and define $f = \frac{1+r}{2}$. Then $f$ belongs to the unit ball of $A(\bB_4)$
  and peaks on $X$,
  hence $X$ is totally null by Theorem \ref{thm:peak}.
  Since $h(\bT^3) \subset X$, the support of $\mu$ is contained in $X$, so the
  support of $\mu$ is totally null as well.
\end{proof}

The following lemma finishes the proof of Theorem \ref{thm:henkin_measure} in the
case $d=4$.

\begin{lem}
  The measure $\mu$ is $\cA_4$-Henkin.
\end{lem}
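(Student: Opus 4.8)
The plan is to realize the integration functional $\varphi \mapsto \int_{\bS_4} \varphi \, d\mu$ on $\cA_4$ as the restriction of a functional of the form $\varphi \mapsto \langle M_\varphi f, g \rangle_{H^2_4}$ for fixed vectors $f,g \in H^2_4$. By the discussion at the end of Subsection \ref{ss:DA}, every such functional is weak-$*$ continuous on $\cM_4$, so producing one that agrees with integration against $\mu$ on $\cA_4$ is precisely what the definition of $\cA_4$-Henkin demands.

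To find the right vectors, I would first use that $\sum_{n=0}^\infty a_n < \infty$ (Lemma \ref{lem:4_isom}): this makes evaluation at $1$ a bounded functional on $\cH$, represented by the reproducing kernel $k_1 = K(\cdot,1) = \sum_{n=0}^\infty a_n z^n \in \cH$. Pushing this vector through the isometry $V \colon \cH \to H^2_4$, $f \mapsto f \circ r$, of Lemma \ref{lem:4_isom} gives
\begin{equation*}
  V k_1 = k_1 \circ r = \sum_{n=0}^\infty a_n (16 z_1 z_2 z_3 z_4)^n \in H^2_4 .
\end{equation*}
Since $V 1 = 1$ and $M_\varphi 1 = \varphi$, the identity I would aim to establish is
\begin{equation*}
  \int_{\bS_4} \varphi \, d\mu = \langle M_\varphi 1, V k_1 \rangle_{H^2_4} = \langle \varphi, V k_1 \rangle_{H^2_4}
\end{equation*}
for every polynomial $\varphi$.

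The main work — and the only delicate point — is verifying this identity on a monomial $\varphi = z^\alpha$, where one must keep careful track of the factors of $16$ coming from the normalization of $a_n$ against the scaling in $h$. On the left, writing $\mu$ as the pushforward of $m$ under $h$ and using the explicit form of $h$, one gets $z^\alpha \circ h = 2^{-|\alpha|} \zeta_1^{\alpha_1 - \alpha_4} \zeta_2^{\alpha_2 - \alpha_4} \zeta_3^{\alpha_3 - \alpha_4}$, whose integral over $\bT^3$ equals $16^{-n}$ when $\alpha = (n,n,n,n)$ and vanishes otherwise. On the right, orthogonality of the monomials in $H^2_4$ forces $\langle z^\alpha, V k_1 \rangle_{H^2_4}$ to vanish unless $\alpha = (n,n,n,n)$; and in that case the definition $a_n = ||r(z)^n||_{H^2_4}^{-2} = 16^{-2n} ||z^{(n,n,n,n)}||_{H^2_4}^{-2}$ makes the single surviving term collapse to exactly $16^{-n}$. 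Hence both sides agree on all monomials, and by linearity on all polynomials.

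Finally, I would promote the identity from polynomials to all of $\cA_4$ by continuity. The left-hand side is bounded by $||\varphi||_\infty \le ||\varphi||_{\cM_4}$, and the right-hand side is a fixed $H^2_4$ inner-product functional, so both are continuous in the $\cM_4$-norm; since polynomials are dense in $\cA_4$ by definition, the two functionals coincide on $\cA_4$. As the functional $\varphi \mapsto \langle M_\varphi 1, V k_1 \rangle_{H^2_4}$ is weak-$*$ continuous on all of $\cM_4$, it is the desired weak-$*$ continuous extension of the integration functional, and therefore $\mu$ is $\cA_4$-Henkin.
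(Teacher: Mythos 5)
Your proof is correct and follows essentially the same route as the paper: both realize the integration functional as $\varphi \mapsto \langle M_\varphi 1, g\rangle_{H^2_4}$ with $g = K(\cdot,1)\circ r$, verify the identity on monomials (where only $\alpha=(n,n,n,n)$ survives, each side giving $16^{-n}=2^{-4n}$), and extend by norm-density of polynomials in $\cA_4$. The only cosmetic difference is that you compute $\langle z^\alpha, g\rangle$ directly from the coefficients $a_n 16^n$ rather than via the isometry and the reproducing property, which amounts to the same calculation.
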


\begin{proof}
  Let $\alpha \in \bN^4$ be a multi-index. Then
  \begin{equation*}
    \int_{\bS_4} z^\alpha \, d \mu
    = \int_{\bT^3} z^\alpha \circ h \, dm
    = 2^{-|\alpha|} \int_{\bT^3} \zeta_1^{\alpha_1 - \alpha_4}
    \zeta_2^{\alpha_2 - \alpha_4} \zeta_3^{\alpha_3 - \alpha_4} \, dm.
  \end{equation*}
  This integral is zero unless $\alpha_4 = \alpha_1= \alpha_2 = \alpha_3 = \colon k$,
  in which case it equals $2^{- 4k}$.

  Let $g = K(\cdot,1) \circ r$, where $K$ denotes the reproducing kernel of $\cH$. Then $g \in H^2_4$
  by Lemma \ref{lem:4_isom}, and it is a power series in $z_1 z_2 z_3 z_4$. Thus,
  $z^\alpha$ is orthogonal to $g$ unless $\alpha_1 = \ldots = \alpha_4 = \colon k$, in which
  case
  \begin{equation*}
    \langle z^\alpha, g \rangle_{H^2_4} = 
    2^{-4 k} \langle r(z)^k, g \rangle_{H^2_4} = 2^{-4 k} \langle z^k, K(\cdot,1)  \rangle_{\cH}
    = 2^{ -4 k},
  \end{equation*}
  where we have used Lemma \ref{lem:4_isom} again.

  Hence,
  \begin{equation*}
    \int_{\bS_4} \varphi \, d \mu = \langle M_\varphi 1, g \rangle_{H^2_4}
  \end{equation*}
  for all polynomials $\varphi$, and hence for all $\varphi \in \cA_4$. Since the right-hand side
  obviously extends to a weak-$*$ continuous functional in $\varphi$ on $\cM_4$, we see that
  $\mu$ is $\cA_4$-Henkin.
\end{proof}

\section{The case \texorpdfstring{$d=2$}{d=2}}
\label{sec:d2}

In this section, we will prove Theorem \ref{thm:henkin_measure} in the case $d=2$ and hence
in full generality
by Lemma \ref{lem:extension}.
To this end, we will also embed a reproducing
kernel Hilbert space on $\bD$ into $H^2_2$.
Let
\begin{equation*}
  r: \ol{\bB_2} \to \ol{\bD}, \quad z \mapsto 2 z_1 z_2,
\end{equation*}
and observe that $r$ maps $\bB_2$ onto $\bD$. For $n \in \bN$, let
\begin{equation*}
  a_n = ||r(z)^n||^{-2}_{H^2_2},
\end{equation*}
and consider the reproducing kernel Hilbert space on $\bD$
with reproducing kernel
\begin{equation*}
  K(z,w) = \sum_{n=0}^\infty a_n (z \ol{w})^n.
\end{equation*}

This space turns
out to be the well-known weighted Dirichlet space $\cD_{1/2}$, which is the reproducing
kernel Hilbert space on $\bD$ with reproducing kernel $(1 - z \ol{w})^{-1/2}$. This
explicit description is not strictly necessary for what follows, but it provides
some context for the arguments involving capacity below.

\begin{lem}
  \label{lem:dirichlet_kernel}
  The kernel $K$ satisfies $K(z,w) = (1 - z \ol{w})^{-1/2}$.
\end{lem}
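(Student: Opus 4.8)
The plan is to compute the coefficients $a_n$ explicitly from the norm formula for monomials, and then to recognize the resulting power series as the binomial expansion of $(1 - z \ol{w})^{-1/2}$.

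First I would compute $||r(z)^n||^2_{H^2_2}$. Since $r(z)^n = 2^n z_1^n z_2^n$, and since $||z_1^n z_2^n||^2_{H^2_2} = \frac{(n!)^2}{(2n)!}$ by the formula $||z^\alpha||^2 = \alpha!/|\alpha|!$ applied to $\alpha = (n,n)$, this yields
\begin{equation*}
  ||r(z)^n||^2_{H^2_2} = 4^n \frac{(n!)^2}{(2n)!},
\end{equation*}
and hence
\begin{equation*}
  a_n = \frac{(2n)!}{4^n (n!)^2} = 4^{-n} \binom{2n}{n}.
\end{equation*}

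Next I would invoke the generalized binomial theorem. For $|t| < 1$ we have $(1 - t)^{-1/2} = \sum_{n=0}^\infty \binom{-1/2}{n} (-t)^n$, so it remains to check that the coefficients agree. Writing out $\binom{-1/2}{n}$ as a product and using the double factorial identity $(2n-1)!! = (2n)!/(2^n n!)$, one obtains
\begin{equation*}
  \binom{-1/2}{n} (-1)^n = \frac{(2n-1)!!}{2^n \, n!} = \frac{(2n)!}{4^n (n!)^2} = a_n.
\end{equation*}
Therefore $\sum_{n=0}^\infty a_n t^n = (1 - t)^{-1/2}$ for all $|t| < 1$.

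Finally I would substitute $t = z \ol{w}$. For $z, w \in \bD$ we have $|z \ol{w}| < 1$, so the series converges and
\begin{equation*}
  K(z,w) = \sum_{n=0}^\infty a_n (z \ol{w})^n = (1 - z \ol{w})^{-1/2},
\end{equation*}
as claimed. The argument is entirely computational; the only point of substance is the central binomial coefficient identity matching $a_n$ to the coefficients of the binomial series, which I would confirm by the double factorial calculation above, so I do not expect any genuine obstacle.
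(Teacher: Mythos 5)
Your proof is correct and follows the same route as the paper: compute $a_n = 4^{-n}\binom{2n}{n}$ from the monomial norm formula and identify it with $(-1)^n\binom{-1/2}{n}$ via the binomial series. The paper states this identity without the double-factorial verification, but the argument is otherwise identical.
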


\begin{proof}
  The formula for the norm of monomials in Section \ref{sec:prelim} shows that
  \begin{equation*}
    a_n = ||r(z)^n||^{-2} = 4^{-n} \frac{(2 n)!}{(n!)^2} = (-1)^n \binom{-1/2}{n},
  \end{equation*}
  so that
  \begin{equation*}
    K(z,w) = \sum_{n=0}^\infty (-1)^n \binom{-1/2}{n} (z \ol{w})^n
    = (1 - z \ol{w})^{-1/2}
  \end{equation*}
  by the binomial series.
\end{proof}

The analogue of Lemma \ref{lem:4_isom} in the case $d=2$ is the following result.

\begin{lem}
  \label{lem:2_isom}
  The map
\begin{equation*}
  \cD_{1/2} \to H^2_2, \quad f \mapsto f \circ r,
\end{equation*}
is an isometry. Moreover, $a_n \simeq (n+1)^{-1/2}$ and $||z^n||^2_{\cD_{1/2}} \simeq (n+1)^{1/2}$.
\end{lem}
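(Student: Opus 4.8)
The plan is to mirror the proof of Lemma \ref{lem:4_isom} as closely as possible, replacing the $d=4$ asymptotics with the $d=2$ asymptotics. First I would verify the isometry claim. As observed in the proof of Lemma \ref{lem:4_isom}, for any reproducing kernel Hilbert space on $\bD$ with a diagonal kernel $\sum_n a_n (z \ol{w})^n$, the monomials $z^n$ form an orthogonal basis with $\|z^n\|^2 = 1/a_n$. With the choice $a_n = \|r(z)^n\|_{H^2_2}^{-2}$ we therefore get $\|z^n\|_{\cD_{1/2}}^2 = 1/a_n = \|r(z)^n\|_{H^2_2}^2$. Since the powers $r(z)^n = (2 z_1 z_2)^n$ are mutually orthogonal in $H^2_2$ (distinct monomials in $z_1,z_2$), the map $f \mapsto f \circ r$ sends an orthogonal basis to an orthogonal sequence preserving norms, hence extends to an isometry.

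For the two asymptotic estimates I would apply Lemma \ref{lem:stirling} with $d=2$. That lemma gives $\|(z_1 z_2)^n\|_{H^2_2}^2 \simeq 2^{-2n}(n+1)^{1/2}$, so
\begin{equation*}
  \|r(z)^n\|_{H^2_2}^2 = 4^n \|(z_1 z_2)^n\|_{H^2_2}^2 \simeq (n+1)^{1/2}.
\end{equation*}
Since $\|z^n\|_{\cD_{1/2}}^2 = \|r(z)^n\|_{H^2_2}^2$, this immediately yields $\|z^n\|_{\cD_{1/2}}^2 \simeq (n+1)^{1/2}$, and taking reciprocals gives $a_n \simeq (n+1)^{-1/2}$. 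Alternatively, one could read the asymptotics directly off the closed form for $a_n$ computed in Lemma \ref{lem:dirichlet_kernel}, namely $a_n = 4^{-n}\binom{2n}{n}$; Stirling's formula applied to the central binomial coefficient gives $\binom{2n}{n} \simeq 4^n (n+1)^{-1/2}$, whence $a_n \simeq (n+1)^{-1/2}$ again.

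I do not expect a genuine obstacle here, since the argument is a direct transcription of the $d=4$ computation with $d=2$ substituted into Lemma \ref{lem:stirling}. The only point deserving slight care is bookkeeping with the scaling factor $4^n = (2^2)^n$ coming from $r(z)=2z_1 z_2$, and confirming that the exponent $(d-1)/2$ in Lemma \ref{lem:stirling} specializes to $1/2$ when $d=2$; both are routine. Unlike the $d=4$ case, the summability $\sum_n a_n < \infty$ fails here (as $a_n \simeq (n+1)^{-1/2}$), which is precisely why the $d=2$ construction in the remainder of the section will require the more delicate capacity-theoretic arguments foreshadowed after Lemma \ref{lem:dirichlet_kernel}, rather than the simple continuity argument used for $d=4$.
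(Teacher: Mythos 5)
Your proposal is correct and follows essentially the same route as the paper: the isometry is obtained exactly as in Lemma \ref{lem:4_isom}, and the asymptotics come from Lemma \ref{lem:stirling} with $d=2$ together with the scaling factor $2^{2n}$. The alternative derivation via the central binomial coefficient is a harmless extra, and your closing remark about the failure of $\sum_n a_n < \infty$ correctly identifies why the $d=2$ case needs the capacity argument.
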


\begin{proof}
  As in the proof of Lemma \ref{lem:4_isom}, we see that $V$ is an isometry.
  Moreover, Lemma \ref{lem:stirling} shows that
  \begin{equation*}
    ||z^n||^2_{\cD_{1/2}} = ||r(z)^n||^2_{H^2_2} = 2^{2 n} ||(z_1 z_2)^n||^2_{H^2_2} \simeq (n+1)^{1/2}
  \end{equation*}
  for $n \in \bN$.
\end{proof}

The crucial difference to the case $d=4$ is that the
functions in $\cD_{1/2}$ do not all extend to continuous functions on $\ol{\bD}$. This
makes the construction
of the measure $\mu$ of Theorem \ref{thm:henkin_measure} more complicated.

The following lemma provides a measure $\sigma$ on the unit circle which will serve as a replacement
for the Dirac measure $\delta_1$, which was used in the case $d=4$.
It is very likely that this result is well known. Since the measure $\sigma$ is crucial for
the construction of the measure $\mu$,
we explicitly indicate how such a measure on the unit circle can arise.

\begin{lem}
  \label{lem:measure_circle}
  There exists a Borel probability measure $\sigma$ on $\bT$ such that
  \begin{enumerate}[label=\normalfont{(\alph*)}]
    \item the support of $\sigma$ has Lebesgue measure $0$, and
    \item
  the functional
  \begin{equation*}
    \bC[z] \to \bC, \quad p \mapsto \int_{\bT} p \, d \sigma,
  \end{equation*}
  extends to a bounded functional on the space $\cD_{1/2}$.
  \end{enumerate}
\end{lem}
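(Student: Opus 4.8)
The plan is to translate condition (b) into a condition on the Fourier coefficients of $\sigma$, and then to recognize that condition as a statement about fractional energy from classical potential theory. Since the monomials form an orthogonal basis of $\cD_{1/2}$ with $||z^n||^2_{\cD_{1/2}} = 1/a_n$, the polynomials are dense in $\cD_{1/2}$, so the integration functional extends to a bounded functional on $\cD_{1/2}$ if and only if it is represented by the inner product against some $g = \sum_{n=0}^\infty b_n z^n \in \cD_{1/2}$. Writing $\widehat\sigma(n) = \int_{\bT} z^n \, d\sigma$ and comparing coefficients in $\int_{\bT} p \, d\sigma = \langle p, g \rangle_{\cD_{1/2}}$ for all polynomials $p$ forces $\overline{b_n}/a_n = \widehat\sigma(n)$, hence $|b_n|^2/a_n = a_n |\widehat\sigma(n)|^2$. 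Using $a_n \simeq (n+1)^{-1/2}$ from Lemma \ref{lem:2_isom}, condition (b) is therefore equivalent to
\[
  \sum_{n=0}^\infty a_n |\widehat\sigma(n)|^2 \simeq \sum_{n=0}^\infty (n+1)^{-1/2} |\widehat\sigma(n)|^2 < \infty.
\]

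Next I would recognize this as a finite-energy condition. Because $\sigma$ is positive we have $|\widehat\sigma(-n)| = |\widehat\sigma(n)|$, so the above is equivalent to $\sum_{n \in \mathbb{Z}} (|n|+1)^{-1/2} |\widehat\sigma(n)|^2 < \infty$. Introducing the kernel $\kappa$ on $\bT$ with Fourier coefficients $\widehat\kappa(n) = (|n|+1)^{-1/2}$, this sum equals the energy $\iint_{\bT \times \bT} \kappa(s \overline t) \, d\sigma(s) \, d\sigma(t)$. A standard computation shows that $\kappa(e^{i\theta})$ is comparable to $|\theta|^{-1/2}$ as $\theta \to 0$, so this energy is finite if and only if the $1/2$-Riesz energy
\[
  I_{1/2}(\sigma) = \iint_{\bT \times \bT} \frac{d\sigma(s) \, d\sigma(t)}{|s - t|^{1/2}}
\]
is finite. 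Thus condition (b) is equivalent to $\sigma$ having finite $1/2$-Riesz energy, while condition (a) asks that $\sigma$ be supported on a Lebesgue-null set.

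It then remains to produce a Borel probability measure on $\bT$ that is supported on a null set yet has finite $1/2$-Riesz energy; equivalently, a Lebesgue-null compact subset of $\bT$ of positive $1/2$-Riesz capacity. This is exactly what Frostman's theorem supplies: any compact set of Hausdorff dimension strictly greater than $1/2$ carries a probability measure of finite $1/2$-energy. Concretely, I would take $K \subset \bT$ to be an isometric copy, inside an arc, of the middle-thirds Cantor set, which has Lebesgue measure zero and Hausdorff dimension $\log 2 / \log 3 > 1/2$, and let $\sigma$ be its natural self-similar measure. Since $\sigma$ is Ahlfors regular of exponent $\log 2 / \log 3$, a routine layer-cake estimate yields $\iint |s-t|^{-\beta} \, d\sigma \, d\sigma < \infty$ for every $\beta < \log 2 / \log 3$, in particular for $\beta = 1/2$, giving (b); and (a) holds because $K$ is null.

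The conceptual heart of the argument is the first step: recognizing that boundedness of the integration functional on $\cD_{1/2}$ is governed by the weighted Fourier sum $\sum (n+1)^{-1/2} |\widehat\sigma(n)|^2$, and that this quantity is a fractional Riesz energy. Once this dictionary is in place, the existence of a null-supported finite-energy measure is a classical fact, and the only points needing care are the asymptotics relating $\widehat\kappa(n) = (|n|+1)^{-1/2}$ to $\kappa(e^{i\theta}) \sim |\theta|^{-1/2}$ and the energy estimate for the Cantor measure. I expect no genuine obstacle here, since the freedom to choose any Cantor set of dimension in $(1/2, 1)$ leaves ample room.
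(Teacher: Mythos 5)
Your proof is correct and follows essentially the same route as the paper: both reduce condition (b) to the Fourier--side condition $\sum_{n\ge 0}(n+1)^{-1/2}|\widehat\sigma(n)|^2<\infty$, identify this with finiteness of the $1/2$-Riesz energy, and realize $\sigma$ as the natural measure on the middle-thirds Cantor set. The only cosmetic differences are that you establish the boundedness criterion as an equivalence via Riesz representation (the paper needs only one implication and gets it by Cauchy--Schwarz) and that you sketch the energy--Fourier-coefficient comparison and the Cantor energy estimate, which the paper instead cites from \cite{EKM+14}.
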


To prove Lemma \ref{lem:measure_circle}, we require the notion of capacity. Background material
on capacity can be found in \cite[Section 2]{EKM+14}.
Let $k(t) = t^{-1/2}$. The \emph{$1/2$-energy} of a Borel probability measure $\nu$ on $\bT$ is defined to be
\begin{equation*}
  I_k(\nu) = \int_{\bT} \int_{\bT} k(|x - y|) d \nu(x) d \nu(y).
\end{equation*}
We say that a compact subset $E \subset \bT$ has
\emph{positive Riesz capacity of degree $1/2$} if there exists a Borel probability measure $\nu$
supported on $E$ with $I_k(\nu) < \infty$.

\begin{proof}[Proof of Lemma \ref{lem:measure_circle}]
  Let $E \subset \bT$ be a compact set with positive Riesz capacity of degree $1/2$, but
  Lebesgue measure $0$. For instance, since $1/2 < \log 2 / \log 3$, the circular middle-third
  Cantor set has this property by \cite[Exercise 2.4.3 (ii)]{EKM+14}. Thus,
  there exists a measure $\sigma$ on $\bT$ whose support is contained in $E$ with $I_k(\sigma) < \infty$.
  Then (a) holds.

  To prove (b),
  for $n \in \bZ$, let
  \begin{equation*}
    \widehat \sigma(n) = \int_{\bT} z^{-n} \, d \sigma(z)
  \end{equation*}
  denote the $n$-th Fourier coefficient of $\sigma$. Since $I_k(\sigma) < \infty$,
  an application of \cite[Exercise 2.4.4]{EKM+14} shows that
  \begin{equation}
    \label{eqn:fourier_decay}
    \sum_{n=0}^\infty \frac{|\widehat \sigma(n)|^2}{ (n+1)^{1/2}} < \infty.
  \end{equation}
  Let now $p$ be a polynomial, say
  \begin{equation*}
    p(z) = \sum_{n=0}^N \alpha_n z^n.
  \end{equation*}
  Then using the Cauchy-Schwarz inequality,
  we see that
  \begin{align*}
    \Big| \int_{\bT} p \, d \sigma  \Big|
    &\le \sum_{n=0}^N |\alpha_n| \, | \widehat \sigma(-n)| \\
    &\le \Big( \sum_{n=0}^N (n+1)^{1/2} |\alpha_n|^2 \Big)^{1/2}
    \Big( \sum_{n=0}^N \frac{|\widehat \sigma(n)|^2}{(n+1)^{1/2}} \Big)^{1/2}.
  \end{align*}
  Lemma \ref{lem:2_isom} shows that the first factor is dominated by $ C ||p||_{\cD_{1/2}}$
  for some constant $C$,
  and the second factor is bounded uniformly in $N$ by \eqref{eqn:fourier_decay}. Thus, (b)
  holds.
\end{proof}

\begin{rem}
  The last paragraph of the proof of \cite[Theorem 2.3.5]{EKM+14} in fact shows that
  the Cantor measure on the circular middle-thirds Cantor set
  has finite $1/2$-energy, thus we can take $\sigma$ to be this measure.
\end{rem}

Let now $\sigma$ be a measure provided by Lemma \ref{lem:measure_circle} and let
$E$ be the support of $\sigma$. Let
\begin{equation*}
  h: \bT \times E \to \bS_2, \quad (\zeta_1,\zeta_2) \mapsto \frac{1}{\sqrt{2}} (\zeta_1, \ol{\zeta_1} \zeta_2),
\end{equation*}
and observe that the range of $h$ is contained in $r^{-1}(E)$. Define $\mu$ to be the pushforward
of $m \times \sigma$ by $h$. We will show that $\mu$ satisfies the conclusion of Theorem \ref{thm:henkin_measure}.

\begin{lem}
  The support of $\mu$ is totally null.
\end{lem}

\begin{proof}
  Let $X = r^{-1}(E)$.
  Since $E$ has Lebesgue measure $0$ by Lemma \ref{lem:measure_circle}, there exists
  by the Rudin-Carleson theorem (i.e. the $d=1$ case of Theorem \ref{thm:peak}) a function
  $f_0 \in A(\bD)$ which peaks on $E$.
  Let $f = f_0 \circ r$. Then $f$ belongs to $A(\bB_d)$ and peaks on $X$,
  so that $X$ is totally null by Theorem \ref{thm:peak}.
  Finally, the support of $\mu$ is contained in $X$, hence it is totally null as well.
\end{proof}

The following lemma finishes the proof of Theorem \ref{thm:henkin_measure}.

\begin{lem}
  The measure $\mu$ is $\cA_2$-Henkin.
\end{lem}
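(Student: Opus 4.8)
The plan is to mimic the argument from the case $d = 4$: I will produce a single vector $g \in H^2_2$ for which
\begin{equation*}
  \int_{\bS_2} \varphi \, d\mu = \langle M_\varphi 1, g \rangle_{H^2_2}
\end{equation*}
holds for all polynomials $\varphi$, and hence for all $\varphi \in \cA_2$ by density and continuity. Since the right-hand side is weak-$*$ continuous in $\varphi$ on $\cM_2$ (see Subsection \ref{ss:DA}), this will establish that $\mu$ is $\cA_2$-Henkin. The new difficulty relative to the case $d = 4$ is that the role of the evaluation $\delta_1$, whose representing vector was the kernel $K(\cdot, 1)$, is now played by the integration functional against $\sigma$. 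This functional is not weak-$*$ continuous on $H^\infty(\bD)$, but it \emph{is} bounded on $\cD_{1/2}$ by Lemma \ref{lem:measure_circle}, and exploiting precisely this boundedness is the crux of the argument.

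First I would compute the moments of $\mu$. For a multi-index $\alpha = (\alpha_1, \alpha_2) \in \bN^2$, the definition of $h$ gives $z^\alpha \circ h = 2^{-|\alpha|/2} \zeta_1^{\alpha_1 - \alpha_2} \zeta_2^{\alpha_2}$ on $\bT \times E$, so Fubini yields
\begin{equation*}
  \int_{\bS_2} z^\alpha \, d\mu = 2^{-|\alpha|/2} \Big( \int_{\bT} \zeta_1^{\alpha_1 - \alpha_2} \, dm \Big) \Big( \int_{E} \zeta_2^{\alpha_2} \, d\sigma \Big).
\end{equation*}
The first factor vanishes unless $\alpha_1 = \alpha_2 =: k$, in which case the expression equals $2^{-k} \widehat\sigma(-k)$.

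Next I would build $g$. Let $L$ be the bounded functional on $\cD_{1/2}$ extending $p \mapsto \int_{\bT} p \, d\sigma$ that is provided by Lemma \ref{lem:measure_circle}, and let $g_0 \in \cD_{1/2}$ be its Riesz representative, so that $\langle z^k, g_0 \rangle_{\cD_{1/2}} = L(z^k) = \widehat\sigma(-k)$ for all $k$. Put $g = g_0 \circ r$, which lies in $H^2_2$ by Lemma \ref{lem:2_isom}. As $g$ is a power series in $z_1 z_2$, the inner product $\langle z^\alpha, g \rangle_{H^2_2}$ vanishes unless $\alpha_1 = \alpha_2 =: k$; in that case $z^\alpha = (z_1 z_2)^k = 2^{-k} r(z)^k$, so the isometry of Lemma \ref{lem:2_isom} gives
\begin{equation*}
  \langle z^\alpha, g \rangle_{H^2_2} = 2^{-k} \langle r(z)^k, g_0 \circ r \rangle_{H^2_2} = 2^{-k} \langle z^k, g_0 \rangle_{\cD_{1/2}} = 2^{-k} \widehat\sigma(-k).
\end{equation*}
This agrees with the moment computation, so $\int_{\bS_2} \varphi \, d\mu = \langle M_\varphi 1, g \rangle_{H^2_2}$ for every monomial, and hence for every polynomial $\varphi$.

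Finally I would pass from polynomials to $\cA_2$ and conclude. Both sides of the identity are continuous for the $\cM_2$-norm: the right-hand side is bounded by $\|M_\varphi\| \, \|1\| \, \|g\|$, while the left-hand side is bounded by $\|\varphi\|_{C(\ol{\bB_2})} \le \|\varphi\|_{\cM_2}$, since $\mu$ is a probability measure and the supremum norm is dominated by the multiplier norm. As the polynomials are dense in $\cA_2$, the identity extends to all of $\cA_2$, and its right-hand side defines a weak-$*$ continuous functional on $\cM_2$ by Subsection \ref{ss:DA}. Therefore $\mu$ is $\cA_2$-Henkin. I expect the only genuinely delicate point to be the boundedness of the integration functional on $\cD_{1/2}$, which is exactly the content of Lemma \ref{lem:measure_circle} and ultimately rests on the capacity estimate for $\sigma$.
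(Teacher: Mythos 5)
Your proposal is correct and follows essentially the same route as the paper: compute the moments of $\mu$, represent the integration functional against $\sigma$ by a vector $g_0 \in \cD_{1/2}$ via Lemma \ref{lem:measure_circle}, transplant it to $g = g_0 \circ r \in H^2_2$ using the isometry of Lemma \ref{lem:2_isom}, and match coefficients to get $\int_{\bS_2} \varphi \, d\mu = \langle M_\varphi 1, g \rangle_{H^2_2}$. The paper's proof is the same argument, stated slightly more tersely.
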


\begin{proof}
  For all $m,n \in \bN$, we have
  \begin{equation*}
    \int_{\bS_2} z_1^m z_2^n \, d \mu = 2^{- (m+n)/2}
    \int_{\bT} \int_E \zeta_1^{m -n} \zeta_2^n \, d m(\zeta_1) \, d \sigma(\zeta_2).
  \end{equation*}
  This quantity is zero unless $m = n$, in which case it equals
  \begin{equation*}
    2^{-n} \int_E \zeta^n d \sigma(\zeta).
  \end{equation*}
  On the other hand,
  Lemma \ref{lem:measure_circle} shows that there exists $f \in \cD_{1/2}$ such that
  \begin{equation*}
    \langle p,f \rangle_{\cD_{1/2}} = \int_{E} p \, d \sigma
  \end{equation*}
  for all polynomials $p$. Let $g = f \circ r$. Then $g$
  belongs to $H^2_d$ by Lemma \ref{lem:2_isom} and it is
  orthogonal to $z_1^n z_2^m$
  unless $n=m$, in which case
  \begin{equation*}
    \langle (z_1 z_2)^n, g \rangle_{H^2_2}
    = 2^{-n} \langle r(z)^n, g \rangle_{H^2_2}
    = 2^{-n} \langle z^n, f \rangle_{\cD_{1/2}}
    = 2^{-n} \int_E \zeta^n \, d\sigma(\zeta).
  \end{equation*}
  Consequently,
  \begin{equation*}
    \int_{\bS_2} \varphi \, d \mu = \langle M_\varphi 1, g  \rangle_{H^2_2}
  \end{equation*}
  for all polynomials $\varphi$, and hence for all $\varphi \in \cA_2$, so that $\mu$ is $\cA_2$-Henkin.
\end{proof}

Theorem \ref{thm:henkin_measure} suggests the following problem, which is deliberately stated somewhat
vaguely.
\begin{prob}
  Find a measure theoretic characterization of $\cA_d$-Henkin measures.
\end{prob}

\bibliographystyle{amsplain}
\bibliography{../../Dropbox/Literature/jabref_database}

\end{document}